\documentclass[reqno]{amsart}

\usepackage{amsmath,amssymb,amscd,amsthm,accents} \usepackage{graphicx}
\DeclareGraphicsExtensions{.eps} \usepackage{mathrsfs}
\usepackage[mathcal]{eucal}
\usepackage{enumitem}

%

\def\sideremark#1{\ifvmode\leavevmode\fi\vadjust{\vbox to0pt{\vss 
			\hbox to 0pt{\hskip\hsize\hskip1em          
				\vbox{\hsize3cm\tiny\raggedright\pretolerance10000%
					\noindent #1\hfill}\hss}\vbox to8pt{\vfil}\vss}}}%


\newtheorem{Thm}{Theorem}{\bfseries}{\itshape}
\newtheorem*{Thm*}{Theorem}{\bfseries}{\itshape}
\newtheorem{Cor}{Corollary}{\bfseries}{\itshape}
{\bfseries}{\itshape}
\newtheorem{Lem}[Cor]{Lemma}{\bfseries}{\itshape}
\newtheorem*{Lem*}{Lemma}{\bfseries}{\itshape}
{\bfseries}{\itshape}
{\bfseries}{\itshape}
\newtheorem{Def}[Cor]{Definition}{\bfseries}{\rmfamily}
{\scshape}{\rmfamily}
\newtheorem{Rem}[Cor]{Remark}{\scshape}{\rmfamily}
{\bfseries}{\itshape}

{\scshape}{\rmfamily}

\renewcommand\ge{\geqslant} \renewcommand\le{\leqslant}
\let\tildeaccent=\~ \let\hataccent=\^
\renewcommand\~[1]{\widetilde{#1}}

\def\<{\left<} \def\>{\right>} \def\({\left(} \def\){\right)}

\def\norm#1{\Vert #1\Vert}

\let\parasymbol=\S \def\secref#1{\parasymbol\ref{#1}}

\let\polishL=l \def\Zoladek.{\.Zol\c adek}

 \def\const{\operatorname{const}}

\def\Re{\operatorname{Re}} 
 \def\dist{\operatorname{dist}}

\def\diam{\operatorname{diam}} 
 
\def\etc.{\emph{etc}.}

 \def\R{{\mathbb R}} \def\C{{\mathbb C}}  \def\N{{\mathbb N}}  
\def\H{{\mathbb H}}



 \def\e{\varepsilon} \def\S{\varSigma}


 \def\d{\,\mathrm d}

 \def\cL{{\mathcal L}} 
  
  \def\cC{{\mathcal C}}

\def\cM{{\mathcal M}}

\def\id{\operatorname{id}}

\def\va{{\mathbf a}}%

\def\vx{{\mathbf x}}

\def\vp{{\mathbf p}}

\def\valpha{{\boldsymbol\alpha}}
\def\vbeta{{\boldsymbol\beta}}

\def\vphi{{\boldsymbol\phi}}

\def\an{\omega}

\DeclareMathOperator{\gr}{gr}
\DeclareMathOperator{\Th}{Th}

\def\rmF{{\mathrm{F}}}
\def\rmS{{\mathrm{S}}}

\begin{document}

\title{The Yomdin-Gromov algebraic lemma revisited}

\author{Gal Binyamini}
\address{Weizmann Institute of Science, Rehovot, Israel}
\email{gal.binyamini@weizmann.ac.il}

\author{Dmitry Novikov}
\address{Weizmann Institute of Science, Rehovot, Israel}
\email{dmitry.novikov@weizmann.ac.il}
\thanks{This research was supported by the ISRAEL SCIENCE FOUNDATION
	(grant No. 1167/17) and by funding received from the MINERVA
	Stiftung with the funds from the BMBF of the Federal Republic of
	Germany. This project has received funding from the European Research Council (ERC) under the European Union's Horizon 2020 research and innovation programme (grant agreement No 802107)}

\subjclass[2010]{	03C64, 	37B40, 14P10, 11U09 }
\keywords{Gromov-Yomdin parametrization, o-minimality}
\begin{abstract}
  In 1987, Yomdin proved a lemma on smooth parametrizations of
  semialgebraic sets as part of his solution of Shub's entropy
  conjecture for $C^\infty$ maps. The statement was further refined by
  Gromov, producing what is now known as the Yomdin-Gromov algebraic
  lemma. Several complete proofs based on Gromov's sketch have appeared
  in the literature, but these have been considerably more complicated
  than Gromov's original presentation due to some technical issues.

  In this note we give a proof that closely follows Gromov's original
  presentation. We prove a somewhat stronger statement, where the
  parameterizing maps are guaranteed to be \emph{cellular}. It turns
  out that this additional restriction, along with some elementary
  lemmas on differentiable functions in o-minimal structures, allows the
  induction to be carried out without technical difficulties.
\end{abstract}
\maketitle
 
\section{The Yomdin-Gromov lemma}

For a $C^r$-smooth function $f:U\to\R^n$ on a domain $U\subset\R^m$ we
denote by $\norm{f}$ the maximum norm on $U$ and
\begin{equation}
  \norm{f}_r := \max_{|\valpha|\le r} \frac{\norm{D^\valpha f}}{\valpha!}.
\end{equation}

In his work on Shub's entropy conjecture, Yomdin
\cite{yomdin:entropy,yomdin:gy} proved a lemma on $C^r$-smooth
parametrizations of semialgebraic sets. This was further refined by
Gromov in \cite{gromov:gy}, see also \cite{burguet:alg-lemma}, with
the following formulation now known as the Yomdin-Gromov algebraic
lemma.

\begin{Thm}[Yomdin-Gromov algebraic lemma]\label{thm:yg}
  Let $X\subset[0,1]^n$ be a semialgebraic set of dimension $\mu$
  defined by conditions $p_j(\vx)=0$ or $p_j(\vx)<0$ where $p_j$ are
  polynomials and $\sum\deg p_j=\beta$. Let $r\in\N$. There exists a
  constant $C=C(n,\mu,r,\beta)$ and semialgebraic maps
  $\phi_1,\ldots,\phi_C:(0,1)^\mu\to X$ such that their images cover
  $X$ and $\norm{\phi_j}_r\le 1$ for $j=1,\ldots,C$.
\end{Thm}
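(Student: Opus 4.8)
The plan is to prove the statement by induction on the dimension $\mu$, following Gromov's strategy of reducing the parametrization problem to a bounded family of maps whose derivatives can be controlled uniformly. The base case $\mu=0$ is trivial, since a zero-dimensional semialgebraic set is a finite set of points whose cardinality is bounded in terms of $(n,\beta)$. For the inductive step I would first use cell decomposition in the semialgebraic category to write $X$ as a finite union (of size bounded by $C(n,\mu,\beta)$) of cells, so that it suffices to parametrize a single $\mu$-dimensional cell $Z$. Each such cell is, up to a coordinate permutation, the graph of a semialgebraic function over an open cell $Z'\subset(0,1)^\mu$ in the remaining coordinates, or fibers over such a $Z'$ between two bounding functions. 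The key reduction is that parametrizing the graph with controlled $C^r$ norm reduces to parametrizing $Z'$ together with the fiber-defining functions, so the induction is set up on $\mu$.

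The heart of the argument is the control of high-order derivatives, and this is where the \emph{cellular} refinement mentioned in the abstract does the real work. The naive obstacle is that a semialgebraic function $f$ on $(0,1)^\mu$ can have arbitrarily large derivatives of every order near the boundary, so one cannot simply take $\phi=\id$ and rescale. The standard device, going back to Yomdin, is reparametrization: one precomposes $f$ with a semialgebraic change of variables that flattens out the derivatives. Concretely, I would first establish the one-variable case, where the Taylor expansion and the monotonicity/convexity decomposition of $o$-minimal (here, semialgebraic) functions let one subdivide $(0,1)$ into a bounded number of intervals on each of which $f$ is monotone and $C^r$-smooth, and then apply a substitution of the form $t\mapsto t^k$ (or more generally a power-like reparametrization) to absorb the blow-up of derivatives at the endpoints. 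This gives maps $(0,1)\to(0,1)$ with the required bound $\norm{\phi}_r\le 1$, at the cost of multiplying the number of charts by a constant depending on $(r,\beta)$.

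The main obstacle, and the step where previous proofs became complicated, is propagating this one-variable reparametrization through the inductive step in several variables while keeping the number of charts and the derivative bounds uniform. The difficulty is that after reparametrizing in the first variable the bounding functions of the fibers, one must simultaneously control derivatives in the remaining $\mu-1$ variables, and the reparametrization needed may depend on the transverse variable in a way that destroys uniform bounds. The cellular structure is what tames this: by insisting that all maps be cellular (compatible with a fixed cell structure, with the derivatives of the gluing data themselves bounded), one arranges that the reparametrization in the new variable can be chosen uniformly over the base cell, so that the product (or fiber-bundle) of the base parametrization with the one-variable fiber parametrization again satisfies $\norm{\phi}_r\le 1$ after a further bounded subdivision. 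I expect this compatibility — ensuring that the elementary lemmas on differentiable semialgebraic functions can be applied fiberwise with constants independent of the fiber — to be the crux, and I would handle it by formulating the inductive hypothesis in the stronger cellular form from the outset so that the required uniformity is built into what is being carried through the induction.
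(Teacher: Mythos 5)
Your outline matches the paper's high-level strategy --- an induction interleaving sets and graphs, cell decomposition, the univariate power substitution to kill boundary blow-up, and cellularity as the device that keeps the induction uniform --- but it has a genuine gap exactly at the point you yourself flag as the crux. Asserting that ``the reparametrization in the new variable can be chosen uniformly over the base cell'' because the maps are cellular is not an argument; it is a restatement of the difficulty that made the Pila--Wilkie and Burguet proofs complicated. Cellularity by itself does not prevent the mixed partials $F^{(\valpha)}$ with $\valpha_1>0$ from blowing up as the transverse variables vary, and nothing in your proposal explains how these are brought under control.

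The paper's actual mechanism has two ingredients you are missing. First, an elementary o-minimal lemma (Lemma~\ref{lem:bdd-derivative}): if $\norm{f'_j}\le 1$ for $j=2,\dots,\ell$, then $f'_1(\vx_1,\cdot)$ is bounded for almost every fixed $\vx_1$ --- this is what rules out the ``potentially unbounded derivatives'' pathology. Second, after arranging $\norm{F(\vx_1,\cdot)}_r\le 1$ fiberwise via a family version of the inductive hypothesis, the paper runs a further induction over the first multi-index $\valpha$ (in degree-lexicographic order) with $\norm{F^{(\valpha)}}>1$: one picks a definable curve $\gamma$ lying in the set where $|F^{(\valpha)}(\vx_1,\cdot)|$ is at least half its fiberwise supremum, reparametrizes only the $\vx_1$ variable by a cellular $r$-parametrization $\phi$ of $(\gamma,F^{(\valpha-1_1)}\circ\gamma)$, and then uses the chain-rule identity for $(F^{(\valpha-1_1)}\circ\gamma\circ\phi)'$ to conclude that $\phi'\cdot F^{(\valpha)}\circ\gamma\circ\phi$, and hence the new top derivative $(\phi')^{\valpha_1}F^{(\valpha)}\circ(\phi,\id)$, is bounded. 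Without this (or an equivalent device) your inductive step does not close: the ``further bounded subdivision'' you invoke cannot repair derivatives that are unbounded on the cell. Note also that the paper's induction is on the number of variables $\ell$ with two interleaved statements $\rmS_\ell$ and $\rmF_\ell$, and already the base case for functions requires swapping the roles of $x$ and $y$ on pieces of a curve where $|f'|>1$ --- a non-cellular operation, which is why $\rmF_1$ must pass through the curve lemma rather than follow directly from the power substitution you describe.
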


Pila and Wilkie later realized that this theorem has remarkable
applications in the seemingly unrelated area of Diophantine
approximation. For the generality required by these applications, they
stated and proved an analog of the algebraic lemma for general
o-minimal structures {\cite[Theorem~2.3]{pila-wilkie}} (see
\cite{vdd:book} for general background on o-minimal geometry).

\begin{Thm*}[Pila-Wilkie's version of Yomdin-Gromov]\label{thm:yg-pw}
  Let $X=\{X_p\subset[0,1]^n\}$ be a family of sets definable in an
  o-minimal structure, with $\dim X_p\le\mu$. There exists a constant
  $C=C(X,r)$ such that for any $p$ there exist definable maps
  $\phi_1,\ldots,\phi_C:(0,1)^\mu\to X_p$ such that their images cover
  $X_p$ and $\norm{\phi_j}_r\le 1$ for $j=1,\ldots,C$.
\end{Thm*}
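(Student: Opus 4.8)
The plan is to prove the Pila–Wilkie o-minimal version of the algebraic lemma by induction on the dimension $\mu$, reducing the statement to a parametrization result for definable functions where we control the derivatives up to order $r$. The key mechanism is a \emph{reparametrization} trick: rather than trying to bound the $r$-th derivatives of a definable map directly, we first parametrize the set by finitely many maps whose first derivatives are bounded, and then iterate the construction to successively tame higher derivatives. The base case $\mu=0$ is trivial since $X_p$ is then a finite union of points with a uniformly bounded number of components by o-minimality (definable families have uniformly bounded fiber complexity). The heart of the matter is the inductive step, which I would organize around the following stages.

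\begin{enumerate}[label=\textbf{(\arabic*)}]
\item \textbf{Cell decomposition with definable choice.} By the o-minimal cell decomposition theorem, applied \emph{uniformly in the parameter} $p$, decompose each fiber $X_p$ into finitely many cells $C_{p,i}$, where the number of cells is bounded independently of $p$. Each open cell of dimension $\mu$ is the graph of a definable map over a base cell in $[0,1]^{\mu}$, and after a coordinate projection we may assume the cell is an open definable subset of $[0,1]^{\mu}$ fibered over a cell of dimension $\mu-1$.
\item \textbf{Taming one derivative at a time via monotonicity.} For a single-variable definable function $f:(0,1)\to\R$, the monotonicity theorem lets us partition $(0,1)$ into finitely many subintervals on each of which $f$ is $C^{r}$ and each derivative $f^{(k)}$ is monotone, hence of constant sign. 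On each such interval I would apply a definable reparametrization $\psi:(0,1)\to(a,b)$ absorbing the blow-up of derivatives: concretely, one shows that precomposing with a suitable definable diffeomorphism bounds $\|f\circ\psi\|_{r}$. The crucial elementary input is that for a monotone $C^{r}$ function the higher Taylor coefficients can be controlled in terms of the first, so a single reparametrization handles all orders simultaneously up to a constant.
\item \textbf{Fiberwise induction and gluing.} Treating the $\mu$-th coordinate as fibered over the $(\mu-1)$-dimensional base, I would apply the inductive hypothesis to the base to obtain $C(\mu-1)$ parametrizing maps with bounded $r$-derivatives, then reparametrize in the fiber direction using stage (2), using definable choice to select the reparametrizations uniformly in the base point. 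The product of the base chart with the fiber reparametrization yields maps on $(0,1)^{\mu}$.
\end{enumerate}

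The uniformity of all these constructions across the family $\{X_p\}$ is what ultimately furnishes a single constant $C=C(X,r)$: the cell decomposition theorem, the monotonicity theorem, and definable Skolem functions all hold \emph{uniformly in definable families} in an o-minimal structure, so the number of cells and the number of reparametrization pieces are bounded by constants depending only on the defining formula of the family $X$ and on $r$, not on $p$. This is the essential advantage of the o-minimal formulation: the combinatorial complexity that in the semialgebraic Theorem~\ref{thm:yg} is tracked by $\beta=\sum\deg p_j$ is here automatically absorbed into the definability of the family.

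The main obstacle, which is precisely the technical issue the paper alludes to in moving from Gromov's sketch to a complete proof, is stage (2): controlling \emph{all} derivatives up to order $r$ simultaneously under a single reparametrization, and doing so \emph{compatibly with the fibered structure} so that the reparametrizations in the fiber direction do not destroy the bounds already achieved on the base. A naive induction reparametrizes each derivative order separately and must then compose reparametrizations, but composition can reintroduce large derivatives in the lower-order terms and can break the product structure of the domain. The paper's stated remedy is to strengthen the inductive hypothesis so that the parametrizing maps are \emph{cellular}: maintaining cellularity through the induction keeps the fiber structure rigid enough that the reparametrization in each new coordinate direction interacts cleanly with the previous ones, which is exactly what lets the induction close without the technical complications present in earlier treatments.
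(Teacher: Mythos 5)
Your outline captures the overall shape of the argument (induction on dimension, uniform cell decomposition, a univariate reparametrization lemma, automatic uniformity over definable families), but it contains a genuine gap exactly where the proof is hard, and it misstates the univariate mechanism. First, the claim in your stage (2) that ``for a monotone $C^r$ function the higher Taylor coefficients can be controlled in terms of the first, so a single reparametrization handles all orders simultaneously'' is not correct. The actual univariate input (Lemma~\ref{lem:yg-univariate-func-step}) kills \emph{one} derivative order at a time: if $\norm{f}_{r-1}\le 1$ and $f^{(r)}$ is monotone, the mean value theorem gives $|f^{(r)}(x)|\le 2/x$, and the substitution $x\mapsto x^2$ then bounds the $r$-th derivative by $O_r(x^{r-2})$; this must be iterated $r-1$ times (Lemma~\ref{lem:yg-curves}), after first normalizing $|f'|\le 1$ by possibly exchanging the two coordinates.

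Second, and more seriously, your stage (3) does not close. Reparametrizing ``in the fiber direction, uniformly in the base point, by definable choice'' controls only the pure fiber derivatives; the mixed partials involving the base variable of your fiberwise-chosen reparametrization are exactly the ``potentially unbounded derivatives'' that made the naive version of Gromov's sketch fail. Cellularity is the bookkeeping device that lets the induction be carried out cleanly, but it is not by itself the resolution. The substantive missing step is an inner induction over the first multi-index $\valpha$ (in degree-lexicographic order) with $|\valpha|\le r$ and $\norm{F^{(\valpha)}}>1$, after first arranging $\norm{F(\vx_1,\cdot)}_r\le 1$ for each fixed $\vx_1$ via the family version of $\rmF_{\ell-1}$. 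One then needs Lemma~\ref{lem:bdd-derivative} to know that $F^{(\valpha)}(\vx_1,\cdot)$ is bounded fiberwise, chooses a definable curve $\gamma$ with $\gamma_1(\vx_1)=\vx_1$ lying in the set where $|F^{(\valpha)}|$ is at least half its fiberwise supremum, applies $\rmF_1$ to the pair $(\gamma,F^{(\valpha-1_1)}\circ\gamma)$ to produce a reparametrization $\phi$ of the $\vx_1$ variable alone, and verifies by the chain rule that $(\phi')^{\valpha_1}\cdot F^{(\valpha)}\circ(\phi,\id)$ is bounded because $(F^{(\valpha-1_1)}\circ\gamma\circ\phi)'$ is. This curve-selection/near-maximizer mechanism is the heart of the proof and is absent from your proposal; without it the inductive step does not go through.
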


In addition to Pila-Wilkie's proof, Burguet \cite{burguet:alg-lemma}
has also given a proof in the semialgebraic setting around the same
time. Both of these proofs roughly follow Gromov's presentation, but
the technical details are significantly more involved. This is due to
an issue with potentially unbounded derivatives that was not
explicitly treated in Gromov's text, see the first paragraph of
\cite[Section 4]{pila-wilkie}. In both Pila-Wilkie's and Burguet's
papers, the problem is resolved by an additional approximation
argument on $C^r$-smooth maps. We also remark that Kocel-Cynk,
Pawlucki and Vallete have given a proof based on a somewhat different
approach in the general o-minimal setting \cite{kpv:alg-lemma}.

In this paper we give a formal treatment of Gromov's original
proof. In particular, we introduce a slightly stronger notion of
\emph{cellular parametrizations} in Definition~\ref{def:cell-param},
and prove the algebraic lemma with the additional requirement that the
parameterizing maps are cellular. This, in combination with some
elementary lemmas on differentiable functions in o-minimal structures
(see~\secref{sec:bounded-derivatives}), allows us to recover Gromov's
original inductive argument without any technical complications.

\begin{Rem}[On the asymptotic constants]
  The constants $C(X,r)$ and $C(n,\mu,r,\beta)$ in these statements are
  purely existential, and one could ask about their dependence on $r$
  and on the complexity $\beta$ in semialgebraic case or, more
  generally, whenever this complexity can be defined (e.g. Pfaffian
  sets). A good understanding of these constants plays a crucial role
  in some potential applications of the algebraic lemma, both in
  dynamics and in Diophantine approximation. We refer the reader to
  \cite[Section 1]{CCStructures} for a discussion of these
  applications.

  We briefly summarize the current state of the art. Gromov's
  presentation \cite{gromov:gy} gives the polynomial dependence on
  $\beta$ in the semialgebraic case (but no explicit dependence on
  $r$). Cluckers, Pila and Wilkie \cite{cpw:params} give polynomial
  dependence on $r$ for globally subanalytic (and slightly more
  general) sets, with no explicit dependence on $\beta$. In
  \cite{CCStructures} we give a result with polynomial dependence on
  both $r$ and $\beta$ in the semialgebraic case: this is the
  statement which is most useful in the potential applications. We
  also give polynomial dependence on $r$ in the globally subanalytic
  case. In a work in progress of the first author and Jones, Schmidt
  and Thomas, a bound polynomial in $\beta$ (but not in $r$) is
  established for sets definable using restricted-Pfaffian
  functions. This is based on a suitable adaptation of the approach
  presented in the present paper to the restricted Pfaffian structure.
\end{Rem}

\subsection{Statement of the main result}\label{ssec:main result}

We prove a refined version of the Yomdin-Gromov algebraic lemma for
general o-minimal structures using the notion of cellular
parametrizations introduced below. To simplify the terminology for
readers not familiar with o-minimal structures, we will assume
everywhere below that we are working with an o-minimal structure over
the reals $\R$. However all the proofs carry over to the general case
without change.

We denote $I:=(0,1)$. For a vector $\vx_{1..\ell}\in\R^\ell$ we denote
by $\vx_{1..i}$ the vector consisting of its first $i$ coordinates.

\begin{Def}
  A \emph{basic cell} $\cC\subset\R^\ell$ of \emph{length} $\ell$ is a product of $\ell$
  finite intervals $I$ and singletons $\{0\}$. A continuous map
  $f:\cC\to\R^\ell$ is called cellular if for every $i=1,\dots,\ell$ 
  \begin{itemize}
  \item  $f_i(\vx_{1..\ell})=f_i(\vx_{1..i})$, i.e. $f_i$ depends only on the first $i$ coordinates of $\vx$, and 
  \item $f_i(\vx_{1..i-1},\cdot)$ is strictly increasing for every
    $\vx_{1..i-1}\in\cC_{1..i-1}$ (where the cell $\cC_{1..i-1}$ is
    the coordinate projection of $\cC$ to
    $\R^{i-1}=\{x_i=\dots=x_\ell=0\}\subset\R^\ell$).
  \end{itemize} 
\end{Def}

Note in particular that cellular maps preserve dimension and the
composition of cellular maps is cellular.

\begin{Def}\label{def:cell-param}
  A cellular $r$-parametrization of a definable set $X\subset\R^\ell$
  is a collection $\Phi=\{\phi_\alpha:\cC_\alpha\to X\}$ of definable
  cellular $C^r$-smooth maps $\phi_\alpha$ with
  $\norm{\phi_\alpha}_r\le 1$ such that
  $X=\cup_\alpha \phi_\alpha(\cC_\alpha)$.

  A cellular $r$-parametrization of a definable map $F:X\to Y$ is a
  cellular $r$-parametrization $\Phi$ of $X$ satisfying
  $\norm{\phi_\alpha^*F}_r\le 1$ for every
  $\phi_\alpha\in\Phi$.
\end{Def}

\begin{Rem}\label{rem:param-set-vs-funcs}
  Let $X\subset\R^\ell$, $Y\subset\R^q$ and $F:X\to Y$ a definable
  map.  Then $\{\phi_\alpha:\cC_\alpha\to X\}$ is a cellular
  $r$-parametrization of a $F$ if and only if
  $\{(\phi_\alpha,F\circ\phi_\alpha):\cC_\alpha\times\{0\}^n\to \gr F\}$
  is a cellular $r$-parametrization of the graph $\gr F$.
\end{Rem}

We will prove the Yomdin-Gromov lemma in the following form.

\begin{Thm}\label{thm:ygbn}
  Let $\ell,r\in\N$. Then
  \begin{description}
  \item[$\rmS_\ell$] Every definable set $X\subset I^\ell$ admits a
    cellular $r$-parametrization.
  \item[$\rmF_\ell$] Every definable function $F:X\to Y$ with
    $X\subset I^\ell$ and $Y\subset I^q$ (for any $q$) admits a
    cellular $r$-parametrization.
  \end{description}
\end{Thm}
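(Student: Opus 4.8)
The two statements $\rmS_\ell$ and $\rmF_\ell$ are intertwined, and the natural scheme is to establish them together, proving each $\rmS_\ell$ and $\rmF_\ell$ assuming all statements with smaller index. By Remark~\ref{rem:param-set-vs-funcs}, parametrizing a function $F:X\to Y$ is equivalent to parametrizing its graph $\gr F\subset I^{\ell+q}$, so the two statements are not independent; one expects $\rmF_\ell$ to reduce to a set-parametrization statement in higher ambient dimension, and conversely $\rmS_\ell$ to use function-parametrizations of the defining functions (e.g.\ the graphs of the boundary functions of a cell). I would first settle the base case $\ell=1$ directly: a definable subset of $I$ is a finite union of points and intervals, each of which admits a trivial cellular parametrization (an affine reparametrization of $I$ onto each open interval), and a definable function on $I$ has, by o-minimality and the cell decomposition theorem, finitely many pieces on which it is $C^r$ and monotone, after which one rescales the domain to control the derivatives up to order $r$.

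\emph{The heart of the induction} is the passage from dimension $\ell-1$ to $\ell$. For $\rmS_\ell$ I would invoke the cell decomposition theorem to write $X$ as a finite union of $C^r$-cells, each fibered over a cell $X'\subset I^{\ell-1}$ by boundary functions $\xi,\eta:X'\to I$ (for an open band) or a single graph function (for a graph cell). The defining functions $\xi,\eta$ together with the base $X'$ are objects of dimension $\le\ell-1$, so by $\rmF_{\ell-1}$ (applied to the pair of functions) and $\rmS_{\ell-1}$ there is a common cellular $r$-parametrization $\psi:\cC'\to X'$ pulling back $\xi,\eta$ to maps with bounded derivatives. I then build the top coordinate $\phi_\ell$ of the parametrizing map by interpolating cellularly between the two pulled-back boundary functions, e.g.\ via the rescaling $(\psi,\,(1-t)\,\psi^*\xi + t\,\psi^*\eta)$ with $t$ the new $I$-coordinate; the product of the cellular map $\psi$ in the first $\ell-1$ coordinates with this increasing interpolation in the last is cellular by construction, and its image is the band. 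This is where the \emph{cellularity} hypothesis pays off: because each coordinate function depends only on the earlier coordinates and is monotone, the Jacobian is triangular and the chain-rule estimates for $\norm{\phi}_r$ decouple coordinate-by-coordinate.

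For $\rmF_\ell$, given $F:X\to Y$, I would first apply $\rmS_\ell$ to obtain a cellular $r$-parametrization $\{\phi_\alpha:\cC_\alpha\to X\}$ of the domain, reducing to the case where $X=\cC$ is a basic cell and $F:\cC\to I^q$ is a definable $C^r$ map; componentwise it suffices to handle $q=1$. The remaining task is to reparametrize $\cC$ so that $\phi^*F$ also acquires bounded derivatives up to order $r$, simultaneously with keeping $\phi$ cellular and $C^r$-bounded. \textbf{The main obstacle} is exactly this derivative control: the derivatives of $\phi^*F$ may blow up near the boundary of the cell, and one must absorb this blowup by a further cellular change of variables that shrinks the domain appropriately. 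This is precisely the ``potentially unbounded derivatives'' difficulty flagged in \cite{pila-wilkie}. The strategy, following Gromov, is to run a secondary induction on $r$: reparametrize so that the \emph{first} derivatives of $\phi^*F$ (and of the coordinates of $\phi$) are bounded, treating these first derivatives themselves as new definable functions to be parametrized by $\rmF$ at the lower smoothness order $r-1$, and then compose the reparametrizations. I expect that the cellular framework, together with the elementary o-minimal lemmas on differentiable functions promised in~\secref{sec:bounded-derivatives} (which let one bound higher derivatives of a composition once the monotone factors are controlled), is exactly what makes this secondary induction close cleanly, avoiding the separate $C^r$-approximation argument that complicated the earlier proofs.
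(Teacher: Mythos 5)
Your overall skeleton (mutual induction on $\ell$, cell decomposition plus linear interpolation in the last coordinate for $\rmS_{\ell}$, reduction via $\rmS_\ell$ and Lemma~\ref{lem:F-vs-Fi} to $F:\cC\to I$ for $\rmF_\ell$) matches the paper, but the two load-bearing steps are missing. First, the base case $\rmF_1$ is not handled by ``rescaling the domain'': if $f:(0,1)\to I$ is monotone and $C^r$ with derivatives blowing up at an endpoint (e.g.\ $f(x)=\sqrt{x}$), no finite subdivision followed by affine reparametrization bounds $\norm{f}_r$, since an affine map only multiplies the $k$-th derivative by a constant less than $1$ while the derivative is unbounded. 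The actual mechanism (Lemma~\ref{lem:yg-univariate-func-step}) is to first exchange the roles of $x$ and $y$ if necessary so that $0<f'\le 1$, and then apply the substitution $x\mapsto x^2$ repeatedly, using the mean-value estimate $f^{(r)}(x)\le 2/x$ to gain one order of bounded derivative at each stage. This is Gromov's key trick and cannot be replaced by rescaling.

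Second, your ``secondary induction on $r$'' for $\rmF_\ell$ is too vague to close, and it is not what the paper does. Parametrizing the first derivatives of $F$ by $\rmF$ at order $r-1$ does not directly control the derivatives of the composition, because $(F\circ\psi)'_1\ne\psi^*(F'_1)$; and you never explain how the derivative in the \emph{first} variable is tamed. The paper's route is: (a) treat $\vx_1$ as a parameter and use a \emph{family} version of $\rmF_{\ell-1}$ (obtained by a compactness argument in \secref{sec:automatic-families}) to reduce to $\norm{F(\vx_1,\cdot)}_r\le 1$ for every fixed $\vx_1$; (b) use Lemma~\ref{lem:bdd-derivative} to ensure the offending derivative is bounded on almost every fiber; (c) induct on the first unbounded multi-index $\valpha$ in degree-lexicographic order, choosing a definable curve $\gamma$ inside the set where $|F^{(\valpha)}(\vx_1,\cdot)|$ is within a factor $2$ of its fiberwise supremum, applying $\rmF_1$ to $(\gamma,F^{(\valpha-1_1)}\circ\gamma)$, and reading off the bound on $\phi'\cdot F^{(\valpha)}\circ\gamma\circ\phi$ from the derivative of $F^{(\valpha-1_1)}\circ\gamma\circ\phi$. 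Steps (a)--(c) are precisely the content that makes the induction work, and none of them appears in your sketch.
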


We remark that the cellular formulation of the Yomdin-Gromov lemma
makes it automatically uniform over parameters: a cellular
parametrization of a family with the parameters placed as the initial
variables gives a cellular parametrization of each fiber by
restriction. This uniformity is essential in the applications.

\begin{Rem}
  This exposition appeared first as a part of the course ``Tame
  geometry and applications'' given by authors at the Weizmann
  Institute of Science, Fall 2018.
\end{Rem}

\section{Why $C^r$-smooth?}

Before going into the proof of the Yomdin-Gromov lemma we will address
a natural question. Semialgebraic sets are analytic objects. Why
would one, starting with such tame objects, venture into the far less
rigid smooth category? It would certainly seem natural to expect a far
more rigid parametrization, say by holomorphic maps with respect to
some suitable norm. It turns out that there are deep obstructions
hiding in the background.

Ideally, one would like to replace finite smoothness order $r\in\N$ by
a bound for all derivatives,
\begin{equation}
  \norm{f}_\infty := \sup_{\valpha} \frac{\norm{D^\valpha f}}{\valpha!}.
\end{equation}
If $U\subset\R^m$ and $f:U\to\R^n$ has $\norm{f}_\infty<\infty$ then
$f$ continues holomorphically to a $1$-neighborhood 
$N_1(U)\subset\C^m$ of $U$. Moreover in $N_{1/2}U$ we have
\begin{equation}
  \max_{N_{1/2}(U)} |f| \le \const \norm{f}_\infty.
\end{equation}
So, instead of this $\infty$-norm we might as well use the norm given
by the maximum of the analytic continuation of $f$ to a neighborhood
of some fixed radius. Below we'll write
\begin{equation}
  \norm{f}_\an := \max_{N_1(U)} |f|.
\end{equation}
One would ideally like to prove the Yomdin-Gromov lemma with the maps
$\phi_i$ extendable to $1$-neighborhood of $(0,1)^k$ and with this
stronger norm. Unfortunately this is impossible already for the simple
family of semialgebraic sets (originally considered in this context by
Yomdin),
\begin{equation}
  X_\e = [(-1,1)\times(-1,1)] \cap \{xy=\e\}.
\end{equation}
We'll show that an $\an$-parametrization of $X_\e$ will require
$\log|\log\e|$ maps (so cannot be uniform in the complexity). To
explain this we take a brief detour to the geometry of hyperbolic
Riemann surfaces.

\subsection{Hyperbolic geometry}

Recall that the upper half-plane $\H$ admits a unique hyperbolic
metric of constant curvature $-4$ given by $|\d z|/2y$. A Riemann
surface $U$ is called \emph{hyperbolic} if its universal cover is the
upper half-plane $\H$. In this case $U$ inherits from $\H$ a unique
metric of constant curvature $-4$ which we denote by
$\dist(\cdot,\cdot;U)$ (we sometimes omit $U$ from this notation if it
is clear from the context). By the uniformization theorem, a domain
$U\subset\C$ is hyperbolic if and only if its complement contains at
least two points.

The following is a straightforward consequence of the classical
Schwarz lemma obtained by lifting the map to universal covers.

\begin{Lem}[\protect{Schwarz-Pick \cite[Theorem~2.11]{milnor:dynamics}}]\label{lem:schwarz-pick}
  If $f:S\to S'$ is a holomorphic map between hyperbolic domains
  $S,S'$ then
  \begin{equation}
    \dist(f(p),f(q);S') \le \dist(p,q;S) \qquad \forall p,q\in S.
  \end{equation}
\end{Lem}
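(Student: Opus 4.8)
The plan is to reduce to the classical Schwarz lemma on the disk by passing to universal covers, exactly as the remark preceding the statement indicates. Since $S$ and $S'$ are hyperbolic, their universal covering maps $\pi\colon\H\to S$ and $\pi'\colon\H\to S'$ are holomorphic, and the metrics $\dist(\cdot,\cdot;S)$ and $\dist(\cdot,\cdot;S')$ are by definition the ones descending from the curvature $-4$ metric on $\H$; thus $\pi$ and $\pi'$ are local isometries. Because $\H$ is simply connected, the holomorphic map $f\circ\pi\colon\H\to S'$ lifts through the covering $\pi'$ to a holomorphic map $\st f\colon\H\to\H$ with $\pi'\circ\st f=f\circ\pi$, by the standard lifting criterion from covering space theory.

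The heart of the argument is the self-map form of the Schwarz lemma: every holomorphic $\st f\colon\H\to\H$ contracts the hyperbolic metric,
\begin{equation*}
  \dist(\st f(a),\st f(b);\H)\le\dist(a,b;\H)\qquad\forall\, a,b\in\H.
\end{equation*}
I would record this first. Transporting to the unit disk by a Cayley transform, it suffices to treat a holomorphic $g\colon\D\to\D$. Pre- and post-composing $g$ with automorphisms of $\D$ that send a chosen point to $0$ reduces to the normalized case $g(0)=0$, where the classical Schwarz lemma gives $|g'(0)|\le1$; letting the base point vary yields the infinitesimal Schwarz--Pick contraction of the Poincar\'e metric, and integrating along geodesics produces the stated distance inequality.

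Finally I would descend the inequality to $S,S'$. Fix $p,q\in S$ and a lift $a\in\H$ of $p$. Since $\pi$ is a Riemannian covering of a complete metric space, the quotient distance is the minimum of the lifted distances over preimages of the endpoints, and this minimum is attained; so one may choose a lift $b$ of $q$ with $\dist(a,b;\H)=\dist(p,q;S)$. The points $\st f(a),\st f(b)$ are lifts of $f(p),f(q)$ under $\pi'$, whence $\dist(f(p),f(q);S')\le\dist(\st f(a),\st f(b);\H)$, again because the downstairs distance is an infimum over lifts. Combining the three relations gives
\begin{equation*}
  \dist(f(p),f(q);S')\le\dist(\st f(a),\st f(b);\H)\le\dist(a,b;\H)=\dist(p,q;S),
\end{equation*}
which is the claim.

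The only genuine obstacle is bookkeeping about the covering maps: one must verify that the quotient distance equals the infimum of the lifted distances over preimages of the endpoints, that an optimal lift of $q$ exists, and that the corresponding infimum upstairs for $S'$ bounds $\dist(f(p),f(q);S')$ from above. These are routine facts for local isometries arising from a properly discontinuous action on a complete length space, and they are the only ingredient beyond the one-variable Schwarz lemma; once they are in place the proof is immediate.
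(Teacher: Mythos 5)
Your proposal is correct and follows exactly the route the paper indicates: it cites Milnor and describes the proof as lifting to universal covers and applying the classical Schwarz lemma, which is precisely what you carry out (with the covering-space bookkeeping made explicit). No issues.
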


\subsection{The obstruction}

Suppose $f:(0,1)\to X_\e$ is a map with $\norm{f}_\an\le 2$. Then $f$
extends analytically to the $1$-neighborhood of $(0,1)$ in $\C$ and is
bounded by $2$ there in absolute value. By analytic continuation, $f$
continues to satisfy $xy=\e$ in $N_1(0,1)$, so
\begin{equation}
  f: N_1(0,1) \to \{xy=\e\} \cap \{|x|,|y|<2\}
\end{equation}
Consider the projection $\pi(x,y)=x$. Then the composition gives a map
\begin{equation}
  \pi\circ f : N_1(0,1) \to \{ \e/2 <|x|<2 \}=A(\e/2,2).
\end{equation}
The domain and the range are hyperbolic domains. So by Schwarz-Pick Lemma~\ref{lem:schwarz-pick} we
have
\begin{equation}
  \diam([\pi\circ f](0,1);A(\e/2,2)) \le \diam((0,1);N_1(0,1)) = \const.
\end{equation}
We see that the set of $x$-s covered by $f$ has bounded hyperbolic
diameter in $A(\e/2,2)$. We would eventually like to cover every
$x\in\pi(X_e)=(\e,1)$. A simple computation gives
\begin{equation}
  \diam((\e,1);A(\e/2,2)) \sim \log|\log\e|
\end{equation}
so indeed at least $\log|\log\e|$ maps will be needed to cover $X_\e$.

\begin{Rem}
  One can show that the bound above is asymptotically sharp,
  i.e. $X_\e$ can indeed be covered by $O(\log|\log\e|)$ maps of unit
  $\an$-norm. Indeed, it suffices to find such a collection of such
  maps from $(0,1)$ to $X_\e$ which extend analytically to the complex
  disc $D(2)$, with both coordinates bounded by $2$ on this
  disc. Equivalently by considering only the $x$-coordinate, we may
  look for a collection of maps from $(0,1)$ into $(\e,1)$ which
  extend to maps $D(2)\to A(\e/2,2)$. Passing to the logarithmic
  chart, we seek maps from $D(2)$ to the strip
  \begin{equation}
    S_{\e/2} = \{ \log\e-1 < \Re t < 1 \}
  \end{equation}
  such that the images of $(0,1)$ cover $(\log\e,0)$. This is easily
  achieved using affine maps, where the radius of the image is
  taken to be proportional to the distance from the boundary of
  $S_{\e/2}$, and we leave it for the reader to verify that in this
  manner one does obtain a covering using $O(\log|\log\e|)$ maps.

\end{Rem}

\section{Proof of the algebraic lemma}
\label{sec:yg-proof}
We start with a trivial transitivity remark. Assume that $\Phi=\{\phi_\alpha:\cC_\alpha\to X\}$ is a cellular
$r$-parametrization of $X$ and
$\Phi_\alpha=\{\phi_{\alpha,\beta}:\cC_{\alpha,\beta}\to \cC_\alpha\}$ is a
cellular $r$-parametrization of $\cC_\alpha$. Then the collection
$\{\phi_\alpha\circ\phi_{\alpha,\beta}\}$ is ``almost'' a cellular
$r$-parametrization of $X$: by the chain rule
$\norm{\phi_\alpha\circ\phi_{\alpha,\beta}}_r=O_{\ell,r}(1)$ and a
linear subdivision reduces the norms to $1$. We will use this
reduction freely. 

A similar remark holds for $\Phi=\{\phi_\alpha:\cC_\alpha\to X\}$ a
cellular $r$-parametrization of $F:X\to Y$ and
$\Phi_\alpha=\{\phi_{\alpha,\beta}:\cC_{\alpha,\beta}\to \cC_\alpha\}$
a cellular $r$-parametrization of $f_\alpha^*F:\cC_\alpha\to Y$.

We record the following simple lemma.

\begin{Lem}\label{lem:F-vs-Fi}
  Let $n\in\N$ and assume that every definable map $F:X\to I$ with
  $\dim X=n$ admits a cellular $r$-parametrization. Then the same is
  true for every definable map $F:X\to Y$ with $\dim X=n$.
\end{Lem}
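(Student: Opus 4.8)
The plan is to build up the parametrization one coordinate of the target at a time, applying the scalar hypothesis repeatedly and gluing the stages together with the transitivity remarks recorded above. Write $Y\subset I^q$ and $F=(F_1,\dots,F_q)$ with each $F_i:X\to I$ definable. Since $\norm{\cdot}_r$ is a maximum over multi-indices of normalized sup-norms of derivatives, the condition $\norm{\phi^*F}_r\le 1$ is equivalent to $\norm{\phi^*F_i}_r\le 1$ for every $i$. Thus a cellular $r$-parametrization of $F$ is precisely a cellular $r$-parametrization $\Phi$ of $X$ that \emph{simultaneously} controls all the scalar functions $F_1,\dots,F_q$, and the task is to produce one family of cellular maps good for all coordinates at once.

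First I would apply the hypothesis to $F_1:X\to I$ to obtain a cellular $r$-parametrization $\{\phi_\alpha:\cC_\alpha\to X\}$ with $\norm{\phi_\alpha^*F_1}_r\le 1$. Then, proceeding inductively on $i=2,\dots,q$, suppose $\{\phi_\alpha\}$ is a cellular $r$-parametrization of $X$ already controlling $F_1,\dots,F_{i-1}$. For each $\alpha$ the pullback $\phi_\alpha^*F_i:\cC_\alpha\to I$ is a definable scalar function; since cellular maps preserve dimension, $\dim\cC_\alpha=\dim\phi_\alpha(\cC_\alpha)\le\dim X=n$, so the hypothesis furnishes a cellular $r$-parametrization $\{\phi_{\alpha,\beta}:\cC_{\alpha,\beta}\to\cC_\alpha\}$ of $\phi_\alpha^*F_i$. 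I would replace $\{\phi_\alpha\}$ by the composed collection $\{\phi_\alpha\circ\phi_{\alpha,\beta}\}$, which still covers $X$ and is cellular because the composition of cellular maps is cellular.

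For the norms after composition, the second transitivity remark gives $\norm{(\phi_\alpha\circ\phi_{\alpha,\beta})^*F_i}_r\le 1$ for the newly treated coordinate, while for each already-handled index $j<i$ the chain rule yields
\begin{equation}
  \norm{(\phi_\alpha\circ\phi_{\alpha,\beta})^*F_j}_r=\norm{\phi_{\alpha,\beta}^*(\phi_\alpha^*F_j)}_r=O_{\ell,r}(1),
\end{equation}
using $\norm{\phi_\alpha^*F_j}_r\le 1$ and $\norm{\phi_{\alpha,\beta}}_r\le 1$; the identical estimate bounds $\norm{\phi_\alpha\circ\phi_{\alpha,\beta}}_r$ itself.

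The one point needing care—and the reason the coordinates must be handled sequentially rather than parametrized separately and intersected—is the final normalization, since the bounds for the earlier coordinates are only $O_{\ell,r}(1)$ rather than $\le 1$. Here I would invoke a single linear subdivision of each $\cC_{\alpha,\beta}$: restricting to a subcube and reparametrizing by an affine scaling with factor $\lambda\le 1$ multiplies every derivative of order $k\ge 1$ by $\lambda^{k}\le\lambda$, so a sufficiently fine subdivision drives the finitely many norms attached to $\phi_\alpha\circ\phi_{\alpha,\beta}$ and to $F_1,\dots,F_i$ down to $\le 1$ \emph{at once} (the order-zero terms are already bounded by $1$, everything mapping into $I^\ell$ or $I$). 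Crucially, subdivision can only decrease these norms, so this step never undoes a coordinate already brought under control. After $q$ such stages every $F_i$ is parametrized with norm $\le 1$, giving the desired cellular $r$-parametrization of $F$. I expect the simultaneous normalization to be the only genuinely delicate step; the remaining verifications are the routine bookkeeping of the transitivity remarks.
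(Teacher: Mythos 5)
Your proposal is correct and follows essentially the same route as the paper: treat the target coordinates sequentially, apply the scalar hypothesis to $\phi_\alpha^*F_i$ at each stage, observe via the chain rule that the previously controlled coordinates retain an $O_{\ell,r}(1)$ bound, and finish with a linear subdivision. The only (immaterial) cosmetic difference is that you normalize by subdivision at every stage while the paper does it once at the end.
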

\begin{proof}
  Let $\Phi$ be a cellular $r$-parametrization of $F_1$. It will be
  enough to find a cellular $r$-parametrization for $F\circ\phi_1$ for
  each $\phi_1\in\Phi$. In other words we may reduce to the case
  $\norm{F_1}_r=O_r(1)$. We now do the same for $F_2$, noting that
  after the composition we still have $\norm{F_1\circ\phi_2}_r=O_r(1)$
  by the chain rule, and now also $\norm{F_2}_r=O_r(1)$. Repeating
  this for each coordinate we finally get $\norm{F_i}_r=O_{m,r}(1)$
  for every $F_i$ and an additional linear subdivision finishes the
  proof.
\end{proof}

The proof of the Yomdin-Gromov lemma is by induction on
$\ell$. Statement $\rmS_1$ is trivial. We establish $\rmF_1$ as a base
case, and then show
$\rmS_{\le\ell}+\rmF_{\le \ell}\implies \rmS_{\ell+1}$ and
$\rmF_{<\ell}+\rmS_{\le\ell}\implies \rmF_\ell$.

\subsection{Proof of $\rmF_1$}
\label{sec:dim1}

We'll start with a simple lemma due to Gromov about killing
derivatives of univariate functions.

\begin{Lem}\label{lem:yg-univariate-func-step}
  Let $r\ge2$. Suppose that $f:I\to I$ is a definable function
  with $\norm{f}_{r-1}\le 1$. Then $f$ has a cellular
  $r$-parametrization.
\end{Lem}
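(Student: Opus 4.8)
We want to parametrize a single function $f:I\to I$ with $\norm{f}_{r-1}\le 1$ by cellular $C^r$-maps, reducing the smoothness order by one more derivative. The natural idea is to find a cellular reparametrization $\psi:I\to I$ so that the derivative $(f\circ\psi)'$ becomes controlled even in its $r$-th derivative. Writing $\phi=f\circ\psi$, the chain rule gives $\phi'=(f'\circ\psi)\cdot\psi'$, so we need to kill the growth of $D^r\phi$, which by Leibniz reduces to controlling $D^{r-1}$ of $f'\circ\psi$ and the derivatives of $\psi$. The crucial observation is that $f'$ is the only quantity whose $(r-1)$-st derivative we do *not* yet control (we only control up to $\norm{f}_{r-1}\le1$, i.e. $D^{r-1}f$, so $D^{r-1}(f')=D^rf$ is unbounded). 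So the heart of the matter is to choose $\psi$ absorbing the blow-up of $f'$ and its derivatives.

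**Main steps.** First I would use o-minimality (cell decomposition and monotonicity) to partition $I$ into finitely many subintervals on which $f$ is $C^r$ and on each of which $f'$ is monotone and of constant sign; a linear reparametrization sends each piece back to $I$, preserving cellularity and the norm bound $\norm{f}_{r-1}\le1$. Thus we may assume $f'$ is monotone (say increasing) and of constant sign on all of $I$. Next, the key construction: I would reparametrize by the inverse of a map built from $f'$ itself. Concretely, one seeks $\psi$ cellular (hence strictly increasing) such that $\psi'$ is comparable to $1/(f''\circ\psi)$ or some similar expression, so that the composition flattens the steepest parts of $f'$. The cleanest route is to set $g=f'$, apply the already-established statement $\rmF_1$ (base case in progress) or, more likely, to integrate: choose $\psi$ so that $f\circ\psi$ has derivative of bounded variation in the appropriate sense, using that a monotone bounded function has total increase at most $1$. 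The elementary lemmas on differentiable functions in o-minimal structures advertised in $\secref{sec:bounded-derivatives}$ are presumably exactly the tool: they should let one bound $\norm{\psi}_r$ and $\norm{f\circ\psi}_r$ simultaneously once $\psi'$ is chosen proportional to a suitable power of the distance to the points where $f'$ degenerates.

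**The obstacle.** The hard part will be controlling *all* intermediate derivatives $D^k(f\circ\psi)$ for $2\le k\le r$ simultaneously, not just the top one. By Faà di Bruno the $r$-th derivative of $f\circ\psi$ mixes $D^jf\circ\psi$ (for $j\le r$, including the uncontrolled $D^rf$) against products of derivatives of $\psi$; forcing every such term to be bounded requires $\psi$'s derivatives to decay at precisely matched rates near the bad points. I expect this is where cellularity pays off: because $\psi$ is monotone and depends on one variable, its derivatives are governed by a single monotone profile, and the o-minimal differentiability lemmas give uniform polynomial bounds on how fast $D^k\psi$ can grow relative to $\psi'$. The remaining work is then bookkeeping: verify that with $\psi'$ chosen (via another monotonicity/cell decomposition so that $f''\circ\psi$, and more generally each $D^jf\circ\psi$, is monotone of constant sign) each Faà-di-Bruno term is $O_r(1)$, and finish with a linear subdivision to bring the norm down to $1$. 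Since $\psi$ is manifestly cellular and composition of cellular maps is cellular, the resulting parametrization is cellular, completing the proof.
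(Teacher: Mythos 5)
Your proposal has the right overall shape---reduce by a monotone cell decomposition, then compose with a cellular reparametrization that flattens the singularity---but it stops short of the two ideas that actually make the lemma work, and the construction you do sketch is aimed at the wrong derivative. Since $\norm{f}_{r-1}\le 1$, every derivative up to order $r-1$ is already under control; the only problem is $f^{(r)}$. A map $\psi$ built from $f'$ or $f''$ (``$\psi'$ comparable to $1/(f''\circ\psi)$'') therefore targets quantities that are already bounded, and it is unclear that such an adaptively defined $\psi$ would itself have bounded higher derivatives. The paper instead uses o-minimality to make $f^{(r)}$ itself monotone and continuous (say positive and decreasing) on each piece, and then extracts the key quantitative fact from the mean value theorem: since $f^{(r-1)}$ is bounded,
\begin{equation*}
  f^{(r)}(x)\le f^{(r)}(c_x)=\frac{f^{(r-1)}(x)-f^{(r-1)}(0)}{x}=O_r\!\left(\frac1x\right),\qquad c_x\in(0,x).
\end{equation*}
This $O(1/x)$ bound is the heart of the lemma, and it is absent from your argument.

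Once you have it, no adaptive choice of $\psi$ is needed: the fixed substitution $\tilde f(x)=f(x^2)$ works. In $\tilde f^{(r)}$ all Fa\`a di Bruno terms involving $f^{(j)}$ with $j<r$ are bounded by hypothesis, and the single dangerous term is $O_r\big(x^r f^{(r)}(x^2)\big)=O_r(x^{r-2})$, which is bounded precisely because $r\ge2$; a linear subdivision then brings the norm down to $1$. Your ``remaining bookkeeping'' step cannot be carried out as written because $\psi$ is never pinned down, and the lemmas of Section~\ref{sec:bounded-derivatives} do not supply the uniform bounds you invoke (they are used later, in the multivariate induction, not here). So as it stands the proposal has a genuine gap at its core step.
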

\begin{proof}
 By  o-minimality we may divide $I$ into finitely many subintervals where $f^{(r)}$ is monotone and 
  continuous. Thus  we assume without loss of generality that $f^{(r)}$  is positive and monotone
  decreasing on $I$. For any $x\in I$
  \begin{equation}
    \frac2x \ge \frac{f^{(r-1)}(x)-f^{(r-1)}(0)}x = f^{(r)}(c_x) \ge f^{(r)}(x)
  \end{equation}
  where $c_x\in(0,x)$ is chosen by the mean-value theorem. Let
  $\tilde f(x)=f(x^2)$. When computing the $\tilde f^{(r)}$ we get a
  bunch of bounded terms plus a term $O_r(x^r f^{(r)}(x^2))$, which is
  bounded by $O_r(x^{r-2})$. Since $r\ge2$ we get
  $\norm{\tilde f}_r=O_r(1)$ and a linear subdivision of $I$ finishes
  the proof.
\end{proof}

We use this to obtain the following.

\begin{Lem}\label{lem:yg-curves}
  Let $X\subset I^2$ be a definable curve. For every $r\in\N$ there
  exists a collection of maps $\{\phi_\alpha:I\to X\}$ such that:
  i) $\cup_\alpha \phi_\alpha(I)=X\setminus\Sigma$ for some finite set
  $\Sigma$; ii) $\norm{\phi_\alpha}_r\le1$ for every $\phi_\alpha$;
  iii) every coordinate of every $\phi_\alpha$ is monotone.
\end{Lem}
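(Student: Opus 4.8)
The plan is to reduce the problem to the univariate situation handled by Lemma~\ref{lem:yg-univariate-func-step}. A definable curve $X\subset I^2$ is, up to removing a finite set $\Sigma$ of singular points and points where the tangent direction is vertical or horizontal, a finite union of definable arcs. By o-minimality and cell decomposition, I would first partition $X$ (minus finitely many points) into finitely many connected pieces on each of which the curve is the graph of a definable $C^r$ function, either of the form $y=g(x)$ over an interval of $x$-values, or of the form $x=h(y)$ over an interval of $y$-values. Choosing the axis with respect to which the piece is a graph is exactly what will let me guarantee monotonicity of the parametrizing coordinate, and corresponds to requiring the branch to have slope bounded by $1$ in absolute value (otherwise swap the roles of $x$ and $y$).

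On each such piece, say a graph $y=g(x)$ with $|g'|\le 1$, I would apply the bootstrapping from Lemma~\ref{lem:yg-univariate-func-step}. The point of fixing the slope to be $\le 1$ is that it already gives control $\norm{g}_1=O(1)$, which is the hypothesis $\norm{f}_{r-1}\le1$ at the level $r-1=1$; but in general I need to kill all derivatives up to order $r$, so I would iterate. Concretely, I would first apply an affine normalization of the interval so that the relevant lower-order derivatives are bounded, and then apply Lemma~\ref{lem:yg-univariate-func-step} repeatedly (in the spirit of the transitivity remark preceding Lemma~\ref{lem:F-vs-Fi}): each application of the substitution $x\mapsto x^2$ raises by one the order of derivatives that are under control, at the cost of subdividing into finitely many monotone pieces and composing with a cellular map. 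Since composition of cellular maps is cellular and the chain rule keeps norms $O_r(1)$, after finitely many iterations I obtain parametrizing maps $\psi:I\to I$ with $\norm{g\circ\psi}_r\le 1$ and $\psi$ strictly monotone. The full parametrization of the piece is then $\phi(t)=(\psi(t),g(\psi(t)))$, whose first coordinate $\psi$ is monotone and whose second coordinate $g\circ\psi$ has bounded $C^r$ norm and is monotone after a further subdivision into intervals where $g$ is monotone.

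The remaining bookkeeping is to check the three conclusions. Conclusion~(i) holds because the pieces cover $X$ after removing the finite set $\Sigma$ of singular/vertical-tangent points, and each piece is covered by the images of its finitely many parametrizing maps. Conclusion~(ii) follows from the norm bounds produced by the iterated application of Lemma~\ref{lem:yg-univariate-func-step} together with a final linear subdivision reducing the $O_r(1)$ constants to $1$. Conclusion~(iii) follows from the monotonicity of $\psi$ and of $g$ (after subdivision) on each piece: both coordinates of $\phi$ are compositions of monotone functions, hence monotone.

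The main obstacle I expect is the careful iteration of the derivative-killing lemma: a single application of Lemma~\ref{lem:yg-univariate-func-step} only handles the step from $\norm{f}_{r-1}\le1$ to a cellular $r$-parametrization, so I must set up a clean induction on the order of controlled derivatives and verify that each substitution $x\mapsto x^2$ composed with the previous parametrization preserves both the cellular (monotone) structure and the $O_r(1)$ norm bounds. This is precisely where the transitivity remark and the stability of cellularity under composition do the work, but getting the base of the iteration right — ensuring the slope-$\le1$ branch genuinely supplies the $\norm{f}_1=O(1)$ needed to start — requires the initial affine normalization to be chosen correctly.
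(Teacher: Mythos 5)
Your proposal is correct and follows essentially the same route as the paper: decompose the curve into graphs via cell decomposition, swap axes so the slope is at most $1$ (which supplies the base case $\norm{f}_1\le 1$), iterate Lemma~\ref{lem:yg-univariate-func-step} a total of $r-1$ times, and parametrize the graph by $(\psi, f\circ\psi)$ with monotonicity coming from the monotonicity of $\psi$ and of $f$. The only cosmetic difference is that the paper secures monotonicity of $f$ up front by o-minimal subdivision rather than at the end.
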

\begin{proof}
  By cell decomposition we decompose $X$ into finitely many points,
  intervals $\{x_0\}\times(a,b)$ and graphs of definable functions
  $f:(a,b)\to I$. We denote by $\Sigma$ the set of points, and easily
  parametrize the vertical intervals as required. It remains to
  parametrize the graphs, and we treat each of them separately.

  By o-minimality we may assume that $f$ is either constant (the parametrization is then trivial) or monotone,
  continuously differentiable, and one of
  \begin{align}
    f' \le -1 &&  -1\le f'<0 && 0<f'\le 1 && 1\le f'&
  \end{align}
  holds uniformly. Changing the orientation of $(a,b)$ and exchanging
  the roles of $x$ and $y$ if needed we may assume $0<f'\le1$ in
  $(a,b)$. We are now in position to apply
  Lemma~\ref{lem:yg-univariate-func-step} repeatedly $r-1$ times to
  obtain an $r$-parametrization $\{\tilde{\phi}_\alpha:I\to I\}$ of
  $f$. Setting $\phi_\alpha=(\tilde{\phi}_\alpha,f\circ\tilde{\phi}_\alpha)$ then
  gives the required parametrization of the graph of $f$ (but note the
  $x$ and $y$ coordinates may have been exchanged). Condition (iii)
  follows from the monotonicity of $\tilde{\phi}_\alpha$ and of $f$.
\end{proof}

We are now ready to deduce $\rmF_1$. For the case $q=1$, apply
Lemma~\ref{lem:yg-curves} to the graph of $F$ and let
$\{\phi_\alpha=(\phi^x_\alpha,\phi^y_\alpha)\}$ denote the resulting
collection. Then $\Phi=\{\phi_\alpha^x\}$ (plus the finitely many
points $x$-coordinates of $\Sigma$, covered by zero-dimensional basic
cells) is a cellular $r$-parametrization of $F$. Indeed, it is
cellular by condition (iii), it covers the domain of $F$ since
$\phi_\alpha$ covers the graph by condition (i), and
\begin{equation}
  \norm{F\circ\phi_\alpha^x}_r = \norm{\phi_\alpha^y}_r \le 1
\end{equation}
by condition (ii).

The case of general $q$ now follows by Lemma~\ref{lem:F-vs-Fi}. Note
that we could not have obtained this directly from
Lemma~\ref{lem:yg-univariate-func-step} because the assumption $r\ge2$
is crucial there, and the reduction in Lemma~\ref{lem:yg-curves}
involves changing the order of the variables and is not cellular.

\subsection{The step $\rmS_{\le\ell}+\rmF_{\le\ell}\implies \rmS_{\ell+1}$}

By cell decomposition it is enough to prove the claim for every cell
$C\subset\R^{\ell+1}$. We assume that $C=C_{1..\ell}\odot(a,b)$ (the
case $C=C_{1..\ell}\odot\{a\}$ is similar but easier). By
$\rmF_{\le\ell}$ we may assume that $(a,b)$ already admits a cellular
$r$-parametrization by maps $f_\alpha:\cC_\alpha\to C_{1..\ell}$. Let
$f:\cC\to C_{1..\ell}$ be one of these maps. Then
$\norm{f^*a}_r,\norm{f^*b}_r\le 1$ and setting
\begin{equation}
  \cC':=\cC\times I, \qquad f'(\vx_{1..\ell+1})=(f,\vx_{\ell+1} f^*b+(1-\vx_{\ell+1}) f^*a)
\end{equation}
we have $\norm{f'}_r\le O_\ell(1)$. Taking a linear subdivision of
$\cC'$ finishes the proof.

\subsection{The step $\rmF_{<\ell}+\rmS_{\le\ell}\implies \rmF_\ell$}

\subsubsection{Reduction to $F:I^\ell\to I$}
By Lemma~\ref{lem:F-vs-Fi} it is enough to prove the claim for a
function $F:X\to I$. By $\rmS_\ell$ we may start by taking a cellular
$r$-parametrization of $X$ and reduce without loss of generality to
the case $F:\cC\to I$ for $\cC$ a basic cell of length $\ell$. If
$\cC$ has any $\{0\}$-coordinates we can just ignore them and reduce
to $F_{<\ell}$, so we assume $F:I^\ell\to I$.

\subsubsection{A family version of $\rmF_\ell$}

We will need a ``family version'' of $\rmF_\ell$ as follows.
\begin{description}
\item[$\rmF_\ell$ for families] Let
  $\{F_\lambda:X\to Y\}_{\lambda\in I}$ be a definable family. Then
  there exists (i) a disjoint partition $I=\cup I_j$ into finitely
  many points and intervals; (ii) for every $I_j$ a collection of
  basic cells $\cC_\alpha$ and cellular maps
  $\{f_{\alpha,\lambda}:\cC_\alpha\to X\}_{\lambda\in I_j}$ such that
  (1) $\norm{f_{\alpha,\lambda}}_r\le1$ and
  $\norm{f_{\alpha,\lambda}^*F_\lambda}_r\le1$ for every fixed
  $\lambda\in I_\alpha$; and (2) for every $\lambda\in I_j$ we have
  $X=\cup_\alpha f_{\alpha,\lambda}(\cC_\alpha)$.
\end{description}
It is not difficult to obtain such a family version by adding
parameters to all of the statements in~\secref{sec:yg-proof}. However,
to simplify the presentation we take a shortcut introduced in the
Pila-Wilkie paper: we show in~\secref{sec:automatic-families} that the
family version of $\rmF_{\ell}$ follows from the regular version by
general o-minimality considerations.

\subsubsection{Reduction to $\norm{F(\vx_1,\cdot)}_r\le 1$ for every
  $\vx_1\in I$}
By the family version of $\rmF_{\ell-1}$ we may, thinking $\vx_1$ as a
parameter, find a cellular $r$-parametrization $\Phi=\{\phi^{\vx_1}_\beta\}$
of $F$ with respect to the $\vx_{2..\mu}$ variables (we consider each
interval $I_j$ separately and rescale back to $I$). Fix one
$\phi^{\vx_1}=\phi^{\vx_1}_\beta$ and set
$\hat F=F\circ(\id,\phi^{\vx_1})$. Then $\norm{\hat F(\vx_1,\cdot)}_r\le1$
for every fixed $\vx_1\in I$. By o-minimality $\hat F$ is
$C^r$-smooth outside a positive-codimension set $V\subset I^\ell$.

We first use $\rmS_\ell$ to find a cellular $r$-parametrization
$\{f_{V,\alpha}:\cC_{V,\alpha}\to V\}$. Each $\cC_{V,\alpha}$ must
have dimension strictly smaller than $\ell$, i.e. it has a
$\{0\}$-coordinate, so we can find a cellular $r$-parametrization for
each $f_{V,\alpha}^*F$ using $F_{<\ell}$ as above.

We now use $\rmS_\ell$ to find a cellular $r$-parametrization
$\{f_\alpha:\cC_\alpha\to I^\ell\setminus V\}$. Fixing one such
$\cC,f$ we note that $f^*\hat F$ is $C^r$-smooth on $\cC$, and
crucially we still have
$\norm{f^*\hat F(\vx_1,\cdot)}_r=O_{\ell,r}(1)$ for every fixed
$\vx_1\in I$ because $\norm{f}_r\le 1$ and $f_1$ does not depend on
$\vx_{2..\ell}$. As before we may assume that $\cC=I^\ell$ and use
linear subdivision to get $\norm{f^*\hat F(\vx_1,\cdot)}_r\le 1$.

\subsubsection{Induction over the first unbounded derivative $\valpha$}

We return to our original notation replacing $F$ by $f^*\hat F$. We
may now assume that $F:I^\ell\to I$ is $C^r$-smooth and
$\norm{F(\vx_1,\cdot)}_r\le1$ for every $\vx_1\in I$. Let
$\valpha\in\N^\ell$ be the first index, in degree-lexicographic order,
such that $|\valpha|\le r$ and $\norm{F^{(\valpha)}}>1$. If no such
$\valpha$ exists we are done. We will reparametrize $F$ by cellular
$r$-maps such that the pullback has strictly larger $\valpha$ and then
finish the argument by induction on $\valpha$.

\subsubsection{Reparametrization of the $\vx_1$ variable}
By assumption $\valpha_1>0$. Using Lemma~\ref{lem:bdd-derivative} and
treating the finitely many exceptional $\vx_1$ values by induction on
$\ell$, we may assume without loss of generality that
$F^{(\valpha)}(\vx_1,\cdot)$ is bounded for every
$\vx_1\in I$. Define
\begin{equation}
  S := \{ \vx_{1..\mu} \in I^\ell : |F^{(\valpha)}(\vx_{1..\mu})| \ge \tfrac12
  \sup_{I^{\ell-1}} |F^{(\valpha)}(\vx_1,\cdot)| \}
\end{equation}
Choose a definable curve $\gamma:I\to S$ such that
$\gamma_1(\vx_1)=\vx_1$. Using $\rmF_1$ we find a cellular
$r$-parametrization $\Phi$ of
$(\gamma,F^{(\valpha-1_1)}\circ\gamma)$. Fix $\phi\in\Phi$ and set
$\tilde F:=F\circ(\phi,\id)$.

\subsubsection{Finishing up: a bound on all derivatives up to $\valpha$}
Recall that all derivatives of $(\phi,\id)$ up to order $r$ and all
derivatives $F^{(\vbeta)}$ with $\vbeta<\valpha$ are bounded by
$1$. It follows easily using the chain rule that
$\tilde F^{(\vbeta)}=O_{\ell,r}(1)$ for $\vbeta<\valpha$. Computing
$\tilde F^{(\valpha)}$ we get a bunch of terms that add up to
$O_{\ell,r}(1)$, plus the term
$(\phi')^{\valpha_1} \cdot F^{(\valpha)}\circ(\phi,\id)$. Now
\begin{equation}
  (\phi')^{\valpha_1} \cdot F^{(\valpha)}\circ(\phi,\id) \le
  (\phi')^{\valpha_1} \cdot 2 F^{(\valpha)}\circ\gamma\circ\phi \le
  2\phi'\cdot F^{(\valpha)}\circ\gamma\circ\phi
\end{equation}
since $|\phi'|\le1$ and $\valpha_1\ge1$. To bound the right hand side
we compute
\begin{equation}
  (F^{(\valpha-1_1)}\circ\gamma\circ\phi)' =
  \phi'\cdot\left(F^{(\valpha)}\circ\gamma +\sum_{j=2}^\mu \gamma_j' \cdot F^{(\valpha-1_1+1_j)}\circ\gamma \right)\circ\phi
\end{equation}
and note that the left hand side, $\phi'\cdot\gamma_j'\circ\phi$ and
$F^{(\valpha-1_1+1_j)}$ are $O_{\ell,r}(1)$. Therefore
$\phi'\cdot F^{(\valpha)}\circ\gamma\circ\phi$ is also
$O_{\ell,r}(1)$, and a further subdivision and linear
reparametrization finishes our induction on $\valpha$.

\subsection{Boundedness of derivatives}
\label{sec:bounded-derivatives}

In this section we prove a simple lemma on boundedness of derivatives
that is used in the proof of Theorem~\ref{thm:ygbn}. We let $\mu$ denote
the Lebesgue measure (or just sum of lengths of intervals).

\begin{Lem}\label{lemma:bdd-length}
  Let $\{f_\e(t):I\to I\}$ be a definable family of functions
  depending on a parameter $\e$. Then for every $\e$,
  \begin{equation}\label{eq:big-fdot}
    \mu\big(\{ t\in I : |f'_\e(t)|>M \}\big) < \frac C M
  \end{equation}
  where $C$ is a constant independent of $\e$.
\end{Lem}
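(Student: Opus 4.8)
The plan is to reduce the measure estimate to an $L^1$ bound on $f'_\e$ via Markov's inequality, and to secure that $L^1$ bound \emph{uniformly} in $\e$ using the o-minimal monotonicity theorem applied to the whole family at once.

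First I would fix $\e$ and invoke the monotonicity theorem: $I$ decomposes into finitely many open subintervals, on each of which $f_\e$ is continuous and either constant or strictly monotone, together with finitely many exceptional points (outside of which $f_\e$ is moreover $C^1$, since definable functions are piecewise smooth). On one such monotone subinterval $J=(a,b)$, the fact that $f_\e$ takes values in $I=(0,1)$ together with monotonicity gives, by the fundamental theorem of calculus,
\begin{equation}
  \int_J |f'_\e(t)|\d t = \Big|\lim_{t\to b^-} f_\e(t) - \lim_{t\to a^+} f_\e(t)\Big| \le 1.
\end{equation}
Summing over the $K_\e$ monotone pieces yields $\int_I |f'_\e(t)|\d t \le K_\e$. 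The finitely many points where $f'_\e$ fails to exist form a set of measure zero and do not affect either this integral or the set in~\eqref{eq:big-fdot}.

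The crucial point, and the only step requiring more than elementary analysis, is that $K_\e$ can be bounded independently of $\e$. This is exactly where definability of the family is used: applying the monotonicity theorem (or cell decomposition) to the definable family $\{f_\e\}$ regarded jointly as a function of $(t,\e)$ produces a single integer $K$ bounding the number of monotone pieces and exceptional points of $f_\e$ for \emph{every} $\e$. Hence $\int_I |f'_\e(t)|\d t \le K$ uniformly in $\e$.

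Finally, Markov's inequality gives, for every $M>0$,
\begin{equation}
  \mu\big(\{t\in I : |f'_\e(t)|>M\}\big) \le \frac1M \int_I |f'_\e(t)|\d t \le \frac{K}{M},
\end{equation}
so the lemma holds with $C=K$. I expect the main obstacle to be precisely the uniform finiteness of $K_\e$ over the parameter; once this standard consequence of o-minimality for definable families is in hand, the remaining estimates are routine.
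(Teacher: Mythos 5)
Your proof is correct, and it reaches the same conclusion by a recognizably different packaging. The paper argues directly on the superlevel set: $\{t: f'_\e(t)>M\}$ (and likewise $\{f'_\e<-M\}$) is a definable subset of $I$, hence a finite union of intervals whose number is uniformly bounded over $\e$ by o-minimality, and each such interval has length at most $1/M$ because $f_\e$ would otherwise exit $I=(0,1)$ along it; this gives the bound with no integration at all. You instead pass through the $C^1$ monotonicity theorem to get a uniform bound $K$ on the number of monotone pieces, deduce the total-variation estimate $\int_I|f'_\e|\le K$ from the range constraint $f_\e(I)\subset I$, and finish with Markov's inequality. Both arguments rest on exactly the same two inputs --- uniform finiteness for definable families and the boundedness of the range --- so the difference is cosmetic rather than substantive; your version costs a little more machinery (FTC on each monotone piece, where one should note that for a monotone piecewise-$C^1$ definable function the inequality $\int_J|f'_\e|\le 1$ is all that is needed and is unproblematic), while the paper's version is slightly more elementary and makes the constant $2K$ visibly explicit. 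Your identification of the uniform finiteness of $K_\e$ as the one genuinely o-minimal step is exactly right.
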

\begin{proof}
  The set where $f_\e'(t)>M$ (resp. $f'(t)<-M$) is a union of
  intervals, with their number uniformly bounded by o-minimality, and
  each of length at most $1/M$: otherwise $f_\e$ would leave $I$ along
  such an interval.
\end{proof}

\begin{Lem}\label{lem:bdd-derivative}
  Let $f:I^\ell\to I$ be definable, and suppose that
  $\norm{f'_j}\le1$ for $j=2,\ldots,\ell$. Then the function
  $f'_1(\vx_1,\cdot)$ is bounded for almost every fixed
  $\vx_1\in I$.
\end{Lem}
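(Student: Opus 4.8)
The plan is to argue by contradiction through o-minimality, reducing the statement to a one-dimensional total-variation estimate that clashes with Fatou's lemma. First I would reformulate ``almost every''. The set $B\subset I$ of those $\vx_1$ for which $\vy\mapsto f'_1(\vx_1,\vy)$ is unbounded on $I^{\ell-1}$ is definable, so by o-minimality it is a finite union of points and intervals; hence $\mu(B)=0$ is equivalent to $B$ containing no subinterval. Thus it suffices to derive a contradiction from the assumption that $B$ contains an interval $J$. Replacing $f$ by $1-f$ (which maps $I\to I$ and negates all derivatives while preserving $\norm{f'_j}\le 1$) if necessary, I will aim to arrange that the blow-up is towards $+\infty$.

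Next I would produce a single definable family of approach curves along which the derivative blows up. For each $\vx_1\in J$ the definable function $\vy\mapsto |f'_1(\vx_1,\vy)|$ is unbounded, so its supremum is attained only in the limit approaching the boundary (or the finite singular locus) of $I^{\ell-1}$. By cell decomposition and definable choice applied to the family indexed by $\vx_1\in J$, after shrinking $J$ to a subinterval $J'$ and fixing the sign as above, I may select a definable map $\Gamma\colon J'\times I\to I^{\ell-1}$, $(t,s)\mapsto\Gamma(t,s)$, such that $f'_1(t,\Gamma(t,s))\to+\infty$ as $s\to0^+$ for every $t\in J'$. Producing this one uniform branch carries all of the o-minimal bookkeeping, and this is where I expect the main difficulty to lie.

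With $\Gamma$ in hand, the hypothesis $\norm{f'_j}\le 1$ enters exactly once, to control a cross term. Set $H_s(t):=f(t,\Gamma(t,s))$, a definable family of functions $I\to I$. By the chain rule, off a finite set of $t$,
\begin{equation*}
  H_s'(t)=f'_1(t,\Gamma(t,s))+\sum_{j=2}^{\ell} f'_j(t,\Gamma(t,s))\,\partial_t\Gamma_{j-1}(t,s).
\end{equation*}
By o-minimality every definable function $I\to I$ has a uniformly bounded number of intervals of monotonicity, so its total variation is bounded by a constant independent of the parameter; applying this to $H_s$ and to each coordinate $\Gamma_k(\cdot,s)$ gives $\int_{J'}|H_s'|\le V$ and $\int_{J'}|\partial_t\Gamma_k(\cdot,s)|\le V$ uniformly in $s$. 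Since $|f'_j|\le 1$, the triangle inequality and these estimates combine to
\begin{equation*}
  \int_{J'} |f'_1(t,\Gamma(t,s))|\,dt \le V+(\ell-1)V =: V',
\end{equation*}
a bound independent of $s$. It is precisely the boundedness of the $f'_j$ that prevents the reparametrization term $\partial_t\Gamma_{j-1}$ from spoiling this variation estimate.

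Finally I would contradict this with Fatou's lemma applied to the nonnegative functions $g_s(t):=\max\big(f'_1(t,\Gamma(t,s)),0\big)\le|f'_1(t,\Gamma(t,s))|$. Since $g_s(t)\to+\infty$ for every $t\in J'$ as $s\to0^+$, Fatou gives $\liminf_{s\to0^+}\int_{J'} g_s(t)\,dt\ge\int_{J'}(+\infty)\,dt=+\infty$, whereas $\int_{J'}g_s\le V'$ for all $s$, a contradiction. Hence $B$ contains no interval, so $\mu(B)=0$ and $f'_1(\vx_1,\cdot)$ is bounded for almost every $\vx_1$. The only genuinely nontrivial ingredient is the construction of the uniform blow-up family $\Gamma$; the remaining steps — the chain rule, the o-minimal total-variation bound, and Fatou — are routine.
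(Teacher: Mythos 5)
Your argument is correct and is essentially the paper's proof: both select a definable family of curves along which $f'_1$ blows up and then use the chain rule together with the fact that $f$ and the curve coordinates take values in $I$ (hence have uniformly bounded variation) to reach a contradiction. The only cosmetic difference is that you integrate along the curve and invoke Fatou, whereas the paper uses the pointwise Chebyshev-type estimate of Lemma~\ref{lemma:bdd-length} to find a single $t$ at which the bounds clash.
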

\begin{proof}
  Assume the contrary. Then the set
  \begin{equation}
    \{ \vx_1\in I : |f'_1(\vx_1,\cdot)| \text{ is unbounded}\}
  \end{equation}
  contains an interval, and we might as well assume after restriction
  and rescaling that it is $I$. For each $M$ we can choose a curve
  $\gamma_M:I\to I^{\ell-1}$ such that
  \begin{equation}
    |f'_1(t,\gamma_M(t))|>M \qquad \forall t\in I,
  \end{equation}
  and we may further assume the dependence on $M$ is definable.
  Applying Lemma~\ref{lemma:bdd-length} to the coordinates of
  $\gamma_M(t)$ and to $f(t,\gamma_M(t))$ we see that outside a set of
  measure $\tilde C/M$ we have $\norm{\gamma_M'(t)}\le M/(3\ell)$ as
  well as
  \begin{gather}
    |f'_{2..\ell}(t,\gamma_M(t))\cdot\gamma_M'(t) + f'_1(t,\gamma_M(t))| = |f(t,\gamma_M(t))'| \le M/3
  \end{gather}
  This is impossible as soon as $M>\tilde C$: the first summand in the left
  hand side is bounded by $M/3$, and the second is at least $M$.
\end{proof} 

\subsection{Automatic uniformity over families}
\label{sec:automatic-families}

In this section we give a model-theoretic proof that the statement
$\rmF_\ell$ for an arbitrary o-minimal structure (and fixed
$\ell,r\in\N$) implies the family version for an arbitrary o-minimal
structure (with the same $\ell,r$). This is the approach employed by
Pila and Wilkie \cite{pila-wilkie}, and we repeat it here with some
more explicit details for non-experts in model-theory who are
nevertheless interested in understanding the mechanics of this general
reduction. However, a reader unfamiliar with the relevant notions from
model theory can alternatively check that the family versions can be
proven in the same manner as the usual statements, essentially
verbatim.

Let $\cM$ be an o-minimal structure and consider a family
$\{F_\lambda:X\to Y\}_{\lambda\in I}$. Let $\cL$ be the language of
$\cM$. Let $\Phi:=\{\phi_\alpha(\vp,\va)\}$ denote the set of all
$\cL$-formulas in two sets of variables and $N\in\N$. For every
$\vphi\in\Phi^N$ we can write the first order formula
$\psi_\vphi(\lambda)$ stating that ``there exists $\vp$ such that that
the formulas $\vphi_1(\vp,\cdot),\ldots,\vphi_N(\vp,\cdot)$ define $N$
cellular maps $f_1,\ldots,f_N:X\to Y$ which form a cellular
$r$-parametrization of $F_\lambda$''. We claim that there are
$\vphi^1,\ldots,\vphi^q$ be such that
$\forall\lambda\in I:\lor_{j=1}^q \psi_{\vphi^j}(\lambda)$ holds in
$\cM$.

Suppose not. Let $c$ denote a new constant and consider the theory
\begin{equation}
  T:=\Th_\cL(\cM)\cup\{c\in I\}\cup\{\lnot\psi_\vphi(c) : N\in\N, \vphi\in\Phi^N \}
\end{equation}
This theory is finitely consistent by our assumption (in fact an
interpretation for $c$ exists in $\cM$). It is therefore consistent by
compactness, and we have an elementary extension $\cM\subset\tilde\cM$
which is again an o-minimal structure. But the axioms of $T$ state
that $F_c^{\tilde\cM}$ has no cellular $r$-parametrization, and this
contradicts $\rmF_\ell$ for $\tilde\cM$.

Now choose $\vphi^1,\ldots,\vphi^q$ as above and set
$I_j:=\{\lambda\in I:\psi_{\vphi^j}(\lambda)\}$. By definable choice
there's a definable map $\lambda\to\vp(\lambda)$ such that for every
$\lambda\in I_j$ the formulas
\begin{equation}
  \vphi^j_1(\vp(\lambda),\vx), \ldots, \vphi^j_{N_j}(\vp(\lambda),\vx)
\end{equation}
define $N_j$ cellular maps which form a cellular $r$-parametrization
of $F_\lambda$. Finally $\cup_j I_j=I$, and refining this into a
partition by points/intervals using o-minimality proves the claim.

\bibliographystyle{plain}
\bibliography{nrefs}

\end{document}